\theoremstyle{definition}
\newtheorem{theorem}{Theorem}
\newtheorem{definition}[theorem]{Definition} 
\newtheorem{lemma}[theorem]{Lemma}
\newtheorem{proposition}[theorem]{Proposition}
\newtheorem{remark}[theorem]{Remark}
\newtheorem*{thm}{Theorem}
\DeclareMathOperator\gr{gr}
\DeclareMathOperator\id{id}
\newcommand\cnt{\mathcal Z}
\newcommand\inv{^{-1}}
\newcommand\iso{\cong}
\newcommand\kk{\Bbbk}
\newcommand\cS{\mathcal S}
\newcommand\cX{\mathcal X}
\newcommand\cY{\mathcal Y}
\newcommand\NN{\mathbb N}
\newcommand\I{\mathfrak I}
\newcommand\oA{\overline{A}}
\newcommand\oB{\overline{B}}
\newcommand\oQ{\overline{Q}}
\newcommand\ophi{\overline{\phi}}
\begin{document}

\title{Isomorphisms of graded path algebras}

\author{Jason Gaddis}
\address{Miami University, Department of Mathematics, 301 S. Patterson Ave., Oxford, Ohio 45056} 
\email{gaddisj@miamioh.edu}
\subjclass[2010]{Primary 16W20, 16W50}

%\date{}

\begin{abstract}
We prove that if two path algebras with homogeneous relations are isomorphic as algebras, then they are isomorphic as graded path algebras. This extends a result by Bell and Zhang in the connected case.
\end{abstract}

\maketitle

Given a family of (noncommutative) algebras, the isomorphism problem is to determine under what conditions there exists an isomorphism between two members of that family.
See \cite{BJ,BZ1,BZ2,CPWZ1,Giso1,Giso2,GH,LWZ1}.

In this note, we study the isomorphism problem for path algebras of quivers modulo homogeneous relations. Such algebras appear naturally in the study of (twisted) Calabi-Yau algebras \cite{Bock,BSW,GR1,RR2,RR1}.
In particular, we show that any isomorphism between two such algebras reduces to an isomorphism that respects the path-length filtration.
%and a stronger condition regarding the source and target of the relation.

Throughout, $\kk$ is a field, all algebras are $\kk$-algebras, and $\dim=\dim_{\kk}$.
An algebra $A$ is $\NN$-graded if it has a $\kk$-vector space decomposition
$A = \bigoplus_{n=0}^\infty A_n$ such that $A_m \cdot A_n \subset A_{m+n}$.
Suppose $B$ is another $\NN$-graded algebra with decomposition $B=\bigoplus_{n=0}^\infty B_n$.
We say $A$ and $B$ are {\sf isomorphic as graded algebras} if there exists an algebra
isomorphism $\phi:A \rightarrow B$ such that $\phi(A_n)=B_n$.
The next theorem is a powerful tool in the study of isomorphism problems for graded algebras.

\begin{thm}[Bell, Zhang {\cite[Theorem 0.1]{BZ1}}]
Let $A$ and $B$ be two connected graded algebras finitely generated over $\kk$ in degree $1$.
If $A \iso B$ as ungraded algebras, then $A \iso B$ as graded algebras.
\end{thm}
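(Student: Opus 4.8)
The plan is to recover each grading from an intrinsic filtration and then force the isomorphism to respect it. Since $A$ is connected and generated in degree $1$, its augmentation ideal $\mathfrak{m}_A = \bigoplus_{n \geq 1} A_n$ satisfies $\mathfrak{m}_A^{\,n} = \bigoplus_{i \geq n} A_i$, whence $\bigcap_n \mathfrak{m}_A^{\,n} = 0$ and the associated graded algebra $\gr_{\mathfrak{m}_A} A = \bigoplus_n \mathfrak{m}_A^{\,n}/\mathfrak{m}_A^{\,n+1}$ is canonically isomorphic, \emph{as a graded algebra}, to $A$ itself; the same holds for $B$. So it suffices to understand how the ungraded isomorphism $\phi$ interacts with these two $\mathfrak m$-adic filtrations. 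Setting $\mathfrak{p} = \phi\inv(\mathfrak{m}_B)$, a codimension-one (hence maximal) ideal of $A$, the map $\phi$ carries $\mathfrak{p}^{\,n}$ bijectively onto $\mathfrak{m}_B^{\,n}$ and therefore induces a graded isomorphism $\gr_{\mathfrak{p}} A \iso \gr_{\mathfrak{m}_B} B \iso B$. Combined with the first remark, the theorem reduces to showing $\gr_{\mathfrak{p}} A \iso \gr_{\mathfrak{m}_A} A$ as graded algebras.

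Here one must be careful: for an \emph{arbitrary} codimension-one ideal this reduction is false. For instance, on the nodal algebra $\kk[x,y]/(xy)$ the ideal at a smooth point of the curve has associated graded a polynomial ring in one variable, which is not isomorphic to the two-generated ring at the node. Thus the existence of the \emph{global} isomorphism $\phi$, and not merely of the ideal $\mathfrak{p}$, has to enter in an essential way.

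To bring $\phi$ in, I would encode each grading as a one-parameter group of automorphisms. The $\NN$-grading on $A$ is the same datum as the action $\lambda \colon \kk^\times \to \Aut(A)$ with $\lambda(t)|_{A_n} = t^n$, equivalently the semisimple Euler derivation whose integer eigenspaces are the $A_n$ and whose $0$-eigenspace is $\kk\cdot 1$; likewise $B$ carries such an action $\mu$, and conjugation by $\phi$ produces a second one-parameter subgroup $\lambda'(t) = \phi\inv\mu(t)\phi$ of $\Aut(A)$, whose $0$-eigenspace is again $\phi\inv(B_0)=\kk\cdot 1$. The point of this reformulation is that if $\lambda$ and $\lambda'$ are conjugate by some $\psi \in \Aut(A)$, then $\Theta := \phi\psi$ satisfies $\Theta\,\lambda(t) = \mu(t)\,\Theta$ for all $t$, so $\Theta$ sends the $t^n$-eigenspace $A_n$ to the $t^n$-eigenspace $B_n$ and is exactly the graded isomorphism we want. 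Hence everything reduces to conjugating the two Euler-type semisimple derivations $E,E'$ of $A$.

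The main obstacle is precisely this conjugacy, and its difficulty is sharpened by the fact that $\lambda$ preserves the $\mathfrak{m}_A$-adic filtration while $\lambda'$ preserves the (a priori different) $\mathfrak{p}$-adic one, so the two actions do not obviously live on a common finite-dimensional quotient. I would attack it by passing to finite-dimensional data: because $\dim A_1 < \infty$, each $A/\mathfrak{m}_A^{\,n}$ is a finite-dimensional algebra whose automorphism group is a genuine affine algebraic group and whose Jacobson radical is the intrinsic image of $\mathfrak{m}_A$; one linearizes $E$ and a suitably truncated $E'$ there, applies conjugacy of maximal tori in a reductive quotient, and then—this is the delicate part—chooses the conjugating elements compatibly across the tower so that, in the pro-algebraic group $\varprojlim \Aut(A/\mathfrak{m}_A^{\,n})$, they converge to a single automorphism $\psi$ of $A$ intertwining $E$ and $E'$. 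Reconciling the two filtrations (showing the $\mathfrak{p}$-adic structure is controlled by the $\mathfrak{m}_A$-adic one, so that the limit exists and lands in $\Aut(A)$ rather than a completion) is where the full force of $\phi$ being an isomorphism, together with local finiteness, is consumed. Feeding the resulting $\psi$ into $\Theta = \phi\psi$ then produces the desired graded isomorphism $A \iso B$.
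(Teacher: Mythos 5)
Your opening reduction is sound: identifying $A$ with $\gr_{\mathfrak{m}_A}A$, setting $\mathfrak{p}=\phi\inv(\mathfrak{m}_B)$, and observing that everything comes down to comparing the $\mathfrak{p}$-adic and $\mathfrak{m}_A$-adic filtrations (your nodal-curve caveat is exactly the right warning that the global $\phi$ must be used). But from that point on the proposal is a plan, not a proof: the entire content of the theorem is concentrated in the step you yourself label ``the delicate part,'' and that step is never carried out. Concretely, three things break. First, an $\NN$-grading is \emph{not} the same datum as a group homomorphism $\kk^\times\to\Aut(A)$ unless $\kk$ is infinite; for finite $\kk$ the group $\kk^\times$ is cyclic of order $|\kk|-1$, so eigenvalues cannot distinguish degree $n$ from degree $n+|\kk|-1$, and the theorem is stated over an arbitrary field. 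You would need genuine $\mathbb{G}_m$-actions (group-scheme actions), and then the conjugacy you invoke becomes a statement about tori in non-reductive automorphism group schemes over a non-algebraically-closed field, where maximal tori need not be conjugate over $\kk$ and your one-parameter subgroups need not be maximal in any case. Second, as you note but do not resolve, $\lambda$ preserves the $\mathfrak{m}_A$-adic filtration while $\lambda'=\phi\inv\mu\phi$ preserves the $\mathfrak{p}$-adic one, so $\lambda'$ simply does not act on the tower $A/\mathfrak{m}_A^{\,n}$; there is no common finite-dimensional quotient on which to run a conjugacy argument, and manufacturing one essentially requires already knowing the two filtrations agree up to an automorphism --- which is the theorem. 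Third, even granting levelwise conjugators, the existence of a \emph{compatible} system in $\varprojlim \Aut(A/\mathfrak{m}_A^{\,n})$ whose limit is an automorphism of $A$ itself (rather than of a completion) is asserted, not argued.

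For contrast, the actual proof (Bell--Zhang, reproduced in generalized form as Theorem~\ref{thm.main} of this paper) is elementary and avoids all of this machinery. One chooses degree-one generators, shows via a tangent-dimension count on codimension-one ideals (Lemma~\ref{lem.codim} here, Lemma~5 in Bell--Zhang) that $\phi(A_+)$ has the same number of generators modulo its square as $B_+$, normalizes $\phi(x_i)=\alpha_i + y_i + (\text{higher degree terms})$ by a linear change of variables, and then runs an induction on the degree of relations --- using commutator identities (in Bell--Zhang) to absorb the scalars $\alpha_i$ --- to conclude that every relation among the $x_i$ transfers to the $y_i$, hence $\dim A_d = \dim B_d$ for all $d$ by symmetry; the naive map $x_i\mapsto y_i$ is then the desired graded isomorphism. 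Your filtration reduction could serve as the opening paragraph of such a proof (it is exactly the case $\alpha_i=0$ for all $i$, where $\phi(A_+)=B_+$ and the associated graded map finishes immediately), but when some $\alpha_i\neq 0$ the heart of the argument still has to be supplied by hand, and nothing in your proposal supplies it.
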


A quiver $Q=(Q_0,Q_1)$ is a directed graph where $Q_0$ is the set of vertices
and $Q_1$ is the set of arrows. We assume throughout that $|Q_0|, |Q_1| <\infty$.
For $a \in Q_1$, we denote by $s(a)$ and $t(a)$ the source and target of $a$, respectively.
The $(u,v)$-entry of the {\sf adjacency matrix} $M_Q$
of $Q$ records the number of arrows from vertex $u$ to vertex $v$.
A path in $Q$ is a collection of arrows $p=a_0a_1\cdots a_n$ such that $s(a_k)=t(a_{k-1})$ for $k=1,\hdots,n$. In this case, $s(p)=s(a_0)$ and $t(p)=t(a_n)$. The path $p$ is said to be a cycle if $s(p)=t(p)$. Note that the number of paths in $Q$ from vertex $u$ to vertex $v$ of length $\ell$ is the $(u,v)$-entry of $(M_Q)^\ell$.

At each vertex $v \in Q_0$ there is a {\sf trivial path} $e_v$ with $s(e_v)=t(e_v)=v$.
A quiver is a semigroup under the operation of concatenation. That is, 
given paths $p=a_0a_1\cdots a_n$ and $q=b_0b_1\cdots b_m$, $p \cdot q=a_0\cdots a_nb_0\cdots b_m$ if $s(q)=t(p)$ and $p\cdot q=0$ otherwise. 
%$p \cdot q = \delta_{s(q),t(p)} pq$ where $\delta$ is the Kronecker delta.
The path algebra $\kk Q$ is the set of finite linear combinations of paths
with multiplication extended linearly and identity element $1 = \sum_{v \in Q_0} e_v$.
This implies that the trivial paths form a complete set of orthogonal idempotents.

If $\I$ is an ideal of $\kk Q$ generated by elements of homogeneous path length at least one,
then $A=\kk Q/\I$ has the structure of an $\NN$-graded algebra when we assign $\deg(a)=1$ for all $a \in Q_1$.
We say the ideal $\I$ is {\sf homogeneous} if it is generated by homogeneous elements.
However, this grading ignores some of a path algebra's most significant structural properties.
Suppose $p \in \I$ is a homogeneous relation in path length.
Then we decompose $p$ as $p = \sum_{u,v \in Q_0} e_u p e_v$ where each summand $e_u p e_v \in \I$.
It will be convenient in this exposition to regard an element $r \in \kk Q$ as {\sf homogeneous} if all summands of $r$ have the same (path) length, source, and target.

\begin{remark}
If $\I$ is a homogeneous ideal of $\kk Q$ with a degree zero relation, then necessarily $\I$ contains $e_v$ for some trivial path $e_v$. Furthermore, $\I$ contains any path passing through vertex $v$. Then $\kk Q/\I \iso \kk Q'/\I'$ where $Q'$ is obtained from $Q$ by removing vertex $v$ and any path through it, and $\I'$ is the induced ideal on $\kk Q'$.
Similarly, if $\I$ contains a degree one relation from vertex $u$ to vertex $v$, then we may replace $Q$ by a quiver $Q'$ wherein we remove one of the nonzero summands of that relation.
Henceforth, we assume that homogeneous ideals $\I$ of a path algebra $\kk Q$ contain no degree zero or degree one relations.
\end{remark}

Let $A = \kk Q/\I$ and $B=\kk Q'/\I'$ with $\I,\I'$ homogeneous ideals.
Set $(A_k)_{uv}$ to be the paths of length $k$ (modulo relations in $\I$) from vertex $u$ to vertex $v$. Thus, we record $\dim A_k$ as a $(|Q_0| \times |Q_0|)$-matrix
where the values of $(\dim A_k)_{uv}$ are $\dim((A_k)_{uv})$ and similarly for $B$.
We say $A$ and $B$ are {\sf isomorphic as graded path algebras} if $M_{Q'}=P M_Q P\inv$ 
for some permutation matrix $P$ corresponding to $\sigma \in \cS_{|Q_0|}$ and 
there exists an algebra isomorphism $\phi:A \rightarrow B$ such that $\phi((A_k)_{uv})=(B_k)_{\sigma(u)\sigma(v)}$.
In Theorem \ref{thm.main}, we prove that if two graded path algebras
$A = \kk Q/\I$ and $B=\kk Q'/\I'$ are isomorphic as (ungraded algebras),
then they are isomorphic as graded path algebras.
Theorem \ref{thm.main} reduces to \cite[Theorem 0.1]{BZ1} when $|Q_0|=1$.

If $J$ is a not necessarily homogeneous ideal in $A$ with $|Q_0|=n$, then we write $\dim A/J = I_n$ when the induced map $A_0 \to A/J$ given by $A_0 = \kk e_1 + \cdots + \kk e_n$ is an isomorphism. We write $\dim J/J^2 = S$ where $S$ is an $n \times n$ matrix in which $S_{uv} = \dim ((e_u J e_v + J^2)/J^2)$.

%Let $A=\kk Q/\I$ with $\I$ homogeneous, $|Q_0|=n$, and $J$ a not necessarily homogeneous ideal in $A$. Consider the induced map $A_0 \to A/J$ where $A_0 = \kk e_1 + \cdots + \kk e_n$. If this map is an isomorphism then we write $\dim A/J = I_n$. We write $\dim J/J^2 = S$ where $S$ is an $n \times n$ matrix in which $S_{uv} = \dim ((e_u J e_v + J^2)/J^2)$.

\begin{definition}
Let $A=\kk Q/\I$ with $\I$ homogeneous, $|Q_0|=n$, and let $S$ be an $n \times n$ matrix with entries in $\NN$. An ideal $J$ of $A$ with the property $\dim A/J = I_n$ and $\dim J/J^2 = S$ is a {\sf codimension $I_n$ ideal of tangent dimension $S$}. Let $J_S(A)$ be the intersection of all such ideals in $A$.
%\[ J_S(A) := \bigcap \{ J : J \text{ is a codimension $I_n$ ideal of tangent dimension $S$ in A} \} .\]
If this intersection is empty, then we set $J_S(A)=A$.
\end{definition}

The {\sf irrelevant ideal} of $A=\kk Q/\I$, generated by the image of $Q_1$ in $A$, is denoted $A_+$. Note that $A_+$ is necessarily a codimension $I_n$ ideal of tangent dimension $M_Q$.

Given two matrices $S,S' \in M_n(\NN)$, we say $S>S'$ if $S_{uv} \geq S_{uv}'$ for all $u,v$ and there exists $u_0,v_0$ such that $S_{u_0v_0} > S_{u_0v_0}'$.
%For $A=\kk Q/\I$ we let $A_+$ denote the {\sf irrelevant ideal} of $A$ generated by $Q_1$.
%This is also sometimes referred to as the arrow ideal.

%\jason{Let $A=\kk Q/\I$ with $\I$ homogeneous and set $M=M_Q$. Throughout we will use the notation $\{a_k^{uv}\}$ for a set of algebra generators of $A$, where $u,v$ denotes the source and target of each particular generator. Here $u$ and $v$ range over $Q_0$ while $k$ ranges from $1$ to $M_{uv}$.}

Next we generalize \cite[Lemma 5]{BZ1} to the case of path algebras.

\begin{lemma}
\label{lem.codim}
Let $A=\kk Q/\I$ with $\I$ homogeneous and set $M=M_Q$. If $S_{uv}> M_{uv}$ for some $u,v \in Q_0$, then $J_S(A)=A$ and if $\dim A_+/(A_+)^2 = N$, then $J_N(A) \subset A_+$.
\end{lemma}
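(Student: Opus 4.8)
The plan is to treat the two assertions separately, with the second being essentially a formal consequence of the definitions. For the first, I would reinterpret the convention $J_S(A)=A$ as the statement that there exists \emph{no} codimension $I_n$ ideal of tangent dimension $S$; it therefore suffices to prove that every codimension $I_n$ ideal $J$ satisfies $\dim((e_u J e_v + J^2)/J^2)\le M_{uv}$ for all $u,v$, i.e.\ $S\le M$ entrywise. The guiding idea is that the degree-one part $A_1$, which satisfies $\dim e_u A_1 e_v = M_{uv}$ (using that $\I$ has no degree-one relations), surjects onto the cotangent bimodule $J/J^2$, and this surjection respects the bigrading by source and target.

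To build this surjection I would first record what $\dim A/J = I_n$ provides: the composite $A_0 \hookrightarrow A \twoheadrightarrow A/J$ is an isomorphism, so $A = A_0 \oplus J$ as vector spaces, $A_0 \cap J = 0$, and inverting this composite yields an algebra retraction $\rho\colon A \to A_0$ with $\rho|_{A_0} = \id$ and $\ker\rho = J$. I would then define $\delta\colon A_1 \to J/J^2$ by $\delta(x) = (x - \rho(x)) + J^2$. The computation $\rho(x - \rho(x)) = 0$ shows $x - \rho(x) \in J$, and because $\rho$ is an algebra map fixing $A_0$, one checks $\delta(ex) = e\,\delta(x)$ and $\delta(xe) = \delta(x)\,e$ for $e \in A_0$, so $\delta$ is a homomorphism of $A_0$-bimodules and hence is compatible with the decompositions $A_1 = \bigoplus_{u,v} e_u A_1 e_v$ and $J/J^2 = \bigoplus_{u,v} e_u (J/J^2) e_v$.

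The crux is surjectivity of $\delta$, equivalently the identity $J = V + J^2$ where $V = \{x - \rho(x) : x \in A_1\}$ is the $A_0$-sub-bimodule of $J$ that is the image of $\delta$. Here I would exploit that $A$ is generated over $A_0$ in degree one, so that $A_k = (A_1)^k$ and $A_1 \subseteq V + A_0$. Expanding $(V + A_0)^k$ and sorting the terms by their number of $V$-factors — zero factors give $A_0$, exactly one gives $A_0 V A_0 \subseteq V$, and two or more can be split into a product of two elements of $J$ and hence lie in $J^2$ — yields $A = A_0 + V + J^2$. Intersecting with $J$ and invoking $A_0 \cap J = 0$ then forces $J = V + J^2$, so $\delta$ carries $e_u A_1 e_v$ onto $e_u (J/J^2) e_v$ and $S_{uv} \le M_{uv}$ for all $u,v$. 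Consequently, if $S_{uv} > M_{uv}$ for some $u,v$ then no codimension $I_n$ ideal of tangent dimension $S$ exists, whence $J_S(A) = A$.

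For the second assertion I expect essentially nothing to prove: by definition $N = \dim A_+/(A_+)^2$, and it has already been observed that $A_+$ is a codimension $I_n$ ideal of this very tangent dimension. Thus $A_+$ is itself one of the ideals whose intersection defines $J_N(A)$, so $J_N(A) \subseteq A_+$ is immediate. The only genuine obstacle in the whole argument is the bimodule bookkeeping needed to establish $J = V + J^2$; once that is in hand, both conclusions follow formally.
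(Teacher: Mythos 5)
Your proposal is correct and takes essentially the same approach as the paper: the paper likewise uses the decomposition $A = A_0 \oplus J$ to replace each degree-one generator $a_k^{uv}$ by $x_k^{uv} = a_k^{uv} - \alpha_k^{uu}e_u$ (no correction when $u \neq v$), which is exactly your $x - \rho(x)$, and then concludes that the classes of these elements span $J/J^2$, giving $S \leq M$ entrywise, with $J_N(A) \subseteq A_+$ immediate because $A_+$ is itself one of the intersected ideals. Your formulation via the retraction $\rho$ and the identity $J = V + J^2$ simply supplies in full detail the steps (in particular, why the corrected generators span $J$ modulo $J^2$) that the paper's proof asserts tersely.
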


\begin{proof}
Let $\{a_k^{uv}\}$ be a set of algebra generators of $A$, where $u,v$ denotes the source and target of each particular generator. Here $u$ and $v$ range over $Q_0$ while $k$ ranges from $1$ to $M_{uv}$.

%Let $\{a_k^{uv}\}_{k=1}^{M_{uv}}$ be a set of generators of $A$.
%where $u,v$ denotes the source and target of each particular generator.
Now if $J$ is an ideal of codimension $I_n$, 
then $J$ is generated by $\{x_k^{uv}\}$ where 
%then $J$ is generated by $\{x_k^{uv}\}_{k=1}^{M_{uv}}$ where 
\[ x^{uv}_k = \begin{cases} 
a_k^{uv} & \text{ if } u \neq v \\ 
a_k^{uu}-\alpha_k^{uu} e_u & \text{ if } u=v, \end{cases}\]
with $\alpha_k^{uu} \in \kk$.
Hence $A$ is generated by $\{x_k^{uv}\}$ and $J^2$ is generated by $\{ x_k^{uv}x_\ell^{vw} \}$.
%Hence $A$ is generated by $\{x_k^{uv}\}_{k=1}^{M_{uv}}$ and $J^2$ is generated by $\{ x_k^{uv}x_\ell^{vw} \}$.
It follows that $(\dim (J/J^2))_{uv} \leq M_{uv}$ and so $\dim J/J^2 \leq M$.
\end{proof}

We will retain the notation $\{a_k^{uv}\}$ used in the previous lemma throughout.

%We show in the next result that in fact $M_Q=M_{Q'}$ up to a reordering of the vertices.

For a homomorphism $\phi:A \rightarrow B$ of $\NN$-graded algebras,
we denote by $\phi_k(a)$ the degree $k$ component of the image $\phi(a)$.
Part of the next lemma generalizes \cite[Lemma 3.2]{KW}.

\begin{lemma}
\label{lem.verts}
Let $A = \kk Q/\I$ and $B=\kk Q'/\I'$ with $\I,\I'$ homogeneous ideals. If $\phi:A \to B$ is an isomorphism, then $|Q_0|=|Q_0'|$ and $M_{Q'} = PM_QP\inv$ for some permutation matrix $P$.
\end{lemma}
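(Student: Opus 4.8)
The plan is to recover the combinatorial data of $Q$ from the ungraded algebra $A$ by tracking the images of the trivial paths together with the first graded piece of the irrelevant ideal.

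First I would analyze idempotents. The set $\{\phi(e_v)\}_{v \in Q_0}$ is a complete set of orthogonal idempotents of $B$. Writing $\pi_B : B \to B/B_+ = B_0 \iso \kk^{|Q_0'|}$ for the algebra projection onto the degree-zero part, I would observe that $B_+$ contains no nonzero idempotent: if $0 \neq b \in B_+$ were idempotent, comparing the lowest-degree homogeneous components of $b$ and $b^2$ gives a contradiction. Consequently each $\pi_B(\phi(e_v))$ is a nonzero idempotent of $B_0$, and these are orthogonal and sum to $1$. Since $B_0 \iso \kk^{|Q_0'|}$, a family of nonzero orthogonal idempotents summing to $1$ corresponds to a partition of $\{1,\dots,|Q_0'|\}$ into nonempty blocks, so $|Q_0| \le |Q_0'|$; running the same argument for $\phi\inv$ gives the reverse inequality, whence $|Q_0| = |Q_0'| =: n$. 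With equality the partition is into singletons, so $\pi_B(\phi(e_v)) = e'_{\sigma(v)}$ for a bijection $\sigma : Q_0 \to Q_0'$; let $P$ be the associated permutation matrix.

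Next I would compare adjacency matrices via the tangent-dimension bound of Lemma \ref{lem.codim}. Consider the ideal $J = \phi\inv(B_+)$ of $A$. Since $A/J \iso B/B_+ \iso \kk^n$ and each $e_v$ maps to the nonzero idempotent $e'_{\sigma(v)}$, the induced map $A_0 \to A/J$ is an isomorphism, so $J$ is a codimension $I_n$ ideal. The key point is that $B_+/B_+^2$ is concentrated in path-length one: as $B$ is generated in degrees $0$ and $1$ we have $B_+^2 = \bigoplus_{k \ge 2} B_k$, hence $B_+/B_+^2 \iso B_1$, and on this quotient any $b \in B$ acts through its degree-zero component alone, since higher components raise path length into $B_{\ge 2}$. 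Transporting the tangent dimension of $J$ through $\phi$ therefore yields
\[
(\dim J/J^2)_{uv} = \dim \phi(e_u)\,(B_+/B_+^2)\,\phi(e_v) = \dim e'_{\sigma(u)} B_1\, e'_{\sigma(v)} = (M_{Q'})_{\sigma(u)\sigma(v)}.
\]

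Finally I would invoke Lemma \ref{lem.codim}, which bounds the tangent dimension of any codimension $I_n$ ideal by $M_Q$ entrywise, giving $(M_{Q'})_{\sigma(u)\sigma(v)} \le (M_Q)_{uv}$ for all $u,v$. Applying the identical argument to $\phi\inv$ (with $B_+$ replaced by $A_+$ and $\sigma$ by $\sigma\inv$) yields the opposite inequality, and hence $(M_{Q'})_{\sigma(u)\sigma(v)} = (M_Q)_{uv}$, i.e. $M_{Q'} = P M_Q P\inv$. The main obstacle is that $\phi$ is only an ungraded isomorphism, so it need not carry $A_+$ to $B_+$ nor respect the grading; the computation above circumvents this precisely because $B_+/B_+^2$ lives in a single degree, forcing the ungraded idempotent images $\phi(e_v)$ to act on it through their degree-zero leading terms $e'_{\sigma(v)}$.
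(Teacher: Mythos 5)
Your proof is correct, and its central step runs in the opposite direction from the paper's. The idempotent analysis at the start (degree-zero components of the $\phi(e_v)$ give a bijection $\sigma$ on vertices) is equivalent to the paper's, but after that the paper pushes forward: it sets $J=\phi(A_+)\subset B$, and to compute $\dim J/J^2$ it must first prove, by an induction on graded components, that $f_u\phi(a)=\phi(a)=\phi(a)f_v$ for every arrow $a$ from $u$ to $v$; only then does Lemma \ref{lem.codim}, applied inside $B$, give $(M_Q)_{uv}\leq (M_{Q'})_{uv}$. You pull back instead: for $K=\phi\inv(B_+)\subset A$ the tangent dimension is computed on $B_+/(B_+)^2\iso B_1$, which lives in a single degree, so every element of $B$ acts on it through its degree-zero component and no induction is needed; Lemma \ref{lem.codim}, applied inside $A$, then gives $(M_{Q'})_{\sigma(u)\sigma(v)}\leq (M_Q)_{uv}$. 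For the lemma alone your route is cleaner. What the paper's harder route buys is the localization fact $\phi(a)=f_u\phi(a)f_v$ for arrows, which is not a throwaway: it is cited again at the start of the proof of Theorem \ref{thm.main}, so the paper needs it anyway, and your argument does not supply it.

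One step needs tightening: you assert that the argument applied to $\phi\inv$ comes with the permutation $\sigma\inv$. A priori it comes with some bijection $\tau$ satisfying $(M_Q)_{\tau(w)\tau(w')}\leq(M_{Q'})_{ww'}$, and nothing you have shown identifies $\tau$ with $\sigma\inv$: the degree-zero component of $\phi\inv(f_{\sigma(v)})$ need not be $e_v$, because $\phi\inv$ can create degree-zero terms from positive-degree elements when $Q'$ has loops. The repair is immediate, though: since $\sigma$ and $\tau$ are bijections, summing your inequality over all pairs $(u,v)$ and the $\phi\inv$-inequality over all pairs $(w,w')$ shows that the total number of arrows of $Q'$ is at most that of $Q$ and conversely; equality of the totals then forces every entrywise inequality to be an equality, which is exactly $M_{Q'}=PM_QP\inv$. (The paper's own ``reversing the argument'' elides the same point.)
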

\begin{proof}
Let $M=M_Q$ and $N=M_{Q'}$.
Suppose $|Q_0|=n$ (resp. $|Q_0'|=m$) and denote the trivial orthogonal idempotents of 
$\kk Q$ (resp. $\kk Q'$) by $e_1,\hdots,e_n$ (resp. $f_1,\hdots,f_m$). Suppose
\[\phi(e_u) = \sum \alpha_{uv} f_v + \text{ (higher degree paths)}\]
where $\alpha_{uv} \in \kk$. Then 
\[ \sum \alpha_{uv} f_v = \phi_0(e_1) = \phi_0(e_1^2) = \sum_v \alpha_{uv}^2 f_v.\]
Thus, each $\alpha_{uv}=0$ or $1$. By the grading on $A$, and because $\I$ is homogeneous, it follows that at least one of the $\alpha_{uv}$ must be nonzero.

If $u \neq w$, then $0 = \phi_0(e_u e_w) = \sum \alpha_{uv}\alpha_{wv} f_v$.
Since there are no degree zero relations in $\I$ or $\I'$,
then $\phi_0(e_u)$ and $\phi_0(e_v)$ have no common nonzero summands amongst the $f_v$.
Hence, $n \leq m$ and by considering  $\phi\inv$ we have $m \leq n$, so $n=m$.
That is, $\phi_0$ determines a permutation $\sigma$ on the vertices of $Q$ so that $\phi_0(e_u) = f_{\sigma(u)}$. After renumbering vertices, we may assume that $\sigma=\id$.

Let $a \in Q_1$ with source $u$ and target $v$.
By the above, 
\begin{align}
\label{eq.triv}
\phi(e_u)=f_u + b_1 + \cdots + b_\ell
\end{align}
for some $b_i \in (B_+)^i$. Then
%and $\phi(e_v)=\alpha' f_v + b'$ for $b,b' \in B_+$. Then
\[ \phi(a) = \phi(e_ua) = (f_u+ b_1 + \cdots + b_\ell) \phi(a).\]
%= \phi(ae_v) = \phi(a) (\alpha' f_v+b').\]
Write $\phi(a) = \phi_0(a) + \cdots + \phi_d(a)$ with $\phi_d(a) \neq 0$.
If $b_\ell\phi_d(a) \neq 0$, then it has degree $d+\ell$, a contradiction. It follows by induction that $b_i\phi_d(a)=0$ for all $i$. Thus, $f_u \phi_d(a) = \phi_d(a)$.
Suppose $b_i\phi_k(a)=0$ for all $i$ and some $k$, $0 < k \leq d$. Then $f_u \phi_k(a)=\phi_k(a)$.
Moreover, $b_\ell\phi_{k-1}(a)$ has degree $\ell+k-1\geq k$ if it is nonzero, implying $b_\ell\phi_{k-1}(a)=0$. As before we can now conclude that 
%moreover, $b_i\phi_{k-1}(a)$ has degree $\geq k$ if it is nonzero, implying 
$b_i\phi_{k-1}(a)=0$ for all $i$.
Induction again implies that $f_u\phi(a)=\phi(a)$. Similarly, $\phi(a)f_v = \phi(a)$.

Since $\phi$ is an isomorphism, then $J=\phi(A_+)$ is a codimension $I_n$ ideal of $B$.
By the previous paragraph, $M_{uv} = \dim((f_u J f_v + J^2)/J^2)$ and so by Lemma \ref{lem.codim}, $M_{uv} \leq N_{uv}$ for all $u,v$. Reversing the argument we get $N_{uv} \leq M_{uv}$, so $N=M$.
\end{proof}

Not every isomorphism $\phi:A \rightarrow B$ of graded path algebras
necessarily takes vertices to vertices. For example, there is an automorphism $\phi$ of the quiver
\[ \xymatrix{ ^1\bullet \ar[r]^a & \bullet^2}\]
defined by $\phi(e_1)=e_1+a$, $\phi(e_2)=e_2-a$, and $\phi(a)=a$.
However, it follows from Lemma \ref{lem.verts} that we may assume that, after reordering vertices, $\phi(e_u)=f_u$ for all $u \in Q_0$.
%However, Theorem \ref{thm.main} will show that there always exists an isomorphism in which trivial paths are sent to trivial paths.

The next lemma proves the main theorem for quivers without loops.

\begin{lemma}
\label{lem.noloops}
Let $A = \kk Q/\I$ and $B=\kk Q'/\I'$ with $\I,\I'$ homogeneous ideals generated in degree at least two.
Suppose $Q$ and $Q'$ have no loops. If $A \iso B$ as (ungraded) algebras, then $A \iso B$ as graded path algebras.
\end{lemma}
\begin{proof}
By Lemma \ref{lem.verts}, $M_{Q'}=PM_QP\inv$ for some permutation matrix $P$.
After renumbering vertices we may assume $M=M_Q=M_{Q'}$. 
Let $\phi:A \rightarrow B$ be the given (ungraded) isomorphism.
Denote by $e_1,\hdots,e_n$ (resp. $f_1,\hdots,f_n$) the trivial idempotents of $\kk Q$ (resp. $\kk Q'$).
For each $1 \leq u,v \leq |Q_0|$, let $\{x_k^{uv}\}$ (resp. $\{y_k^{uv}\}$) 
%For each $1 \leq u,v \leq |Q_0|$, let $\{x_k^{uv}\}_{k=1}^{M_{uv}}$ (resp. $\{y_k^{uv}\}_{k=1}^{M_{uv}}$) 
denote the subset of generators of $A_1$ (resp. $B_1$) that lie in $e_uA_1e_v$ (resp. $f_uB_1f_v$).

The proof of Lemma \ref{lem.verts} asserts that if $a \in Q_1$ has source $u$ and target $v$, then for every $k$ we have that $\phi_k(a)$ is a linear combination of paths with source $u$ and target $v$. In particular, $f_u \phi_0(a) = \phi_0(a) = \phi_0(a)f_v$. If $u=v$, then $\phi_0(a)=\alpha f_u$ for some $\alpha \in \kk$. If $u\neq v$, then $\phi_0(a)=0$.
An identical argument may be applied to $\phi\inv$.

Set $J = \phi(A_+)$ and $K = \phi\inv(B_+)$.
Since $B_+$ is generated as an ideal by the $\{y_k^{uv}\}$, then by the grading on $B$,
\[ B_+/(B_+)^2 = \bigoplus_{u,v \in Q_0} \bigoplus_{k=1}^{M_{uv}} \kk \overline{y_k^{uv}}.\]
As $K$ is a codimension $I_n$ ideal of tangent dimension $M$, then by a linear change of variable,
\[ K_{uv} = \begin{cases}
	e_uA(x_1^{uv},\hdots, x_{M_{uv}}^{uv})Ae_v & \text{ if } u \neq v \\
	e_uA(x_1^{uu},\hdots, x_{M_{uu}}^{uu}-\alpha_u e_u)Ae_u & \text{ if } u = v
	\end{cases}
\]
for some $\alpha_u \in \kk$ and 
\[(K/K^2)_{uv} = \begin{cases}
	\bigoplus_{k=1}^{M_{uv}} \kk \overline{x_k^{uv}}  & \text{ if } u \neq v \\
	\bigoplus_{k=1}^{M_{uu}} \kk \overline{x_k^{uu}} \oplus \kk \overline{x_{M_{uu}}^{uu}-\alpha_u e_u} & \text{ if } u = v.
	\end{cases}\]
Thus, $\phi$ induces a $\kk$-linear isomorphism $\ophi: K/K^2 \rightarrow B_+/(B_+)^2$.

Set
\begin{align*}
	\cX &= \{ x_k^{uv}  : u \neq v \text{ or } (u=v \text{ and } k \neq M_{uu}) \} \\
	\cY &= \{ y_k^{uv}  : u \neq v \text{ or } (u=v \text{ and } k \neq M_{uu}) \}.
\end{align*}
Similarly, we let $\cX^c=\{ x_{M_{uu}}^{uu} \}$ and $\cY^c = \{ y_{M_{uu}}^{uu} \}$. After reordering vertices and a linear change of variable, there exists $(y_k^{uv})' \in (B_+)^2$ such that 
\begin{align}
\label{eq.phi}
\phi(x_k^{uv}) = \begin{cases}
	y_k^{uv} + (y_k^{uv})' & \text{ if } x_k^{uv} \in \cX \\
	\alpha_u f_u + y_k^{uv} + (y_k^{uv})' & \text{ otherwise}.
	\end{cases}
\end{align}

In case $\alpha_u = 0$ for some $u$, then there will be no loss in assuming $x_{M_{uu}}^{uu} \in \cX$ and $y_{M_{uu}}^{uu} \in \cY$.
Suppose $\alpha_u=0$ for all $u \in Q_0$. Then $J \subset B_+$ and because $J$ has codimension $I_n$ we have $J=B_+$. Thus, $\phi$ induces a graded algebra isomorphism from 
$\gr_{A_+} A := \bigoplus_{i=0}^\infty (A_+)^i/(A_+)^{i+1}$ to
$\gr_{B_+} B := \bigoplus_{i=0}^\infty (B_+)^i/(B_+)^{i+1}$,
which maps $e_u \mapsto f_u$.
Since $A \iso \gr_{A_+} A$ and $B \iso \gr_{B_+} B$ as graded algebras.
\end{proof}

The proof \cite[Theorem 1]{BZ1} relied heavily on the use of commutators.
Before proving the general theorem, we modify this technique for use in path algebras.

Keep the notation introduced in Lemma \ref{lem.noloops}.
Let $p$ be a path from vertex $u$ to vertex $v$ and note that we allow $u=v$ here.
Suppose $p$ contains some $a_w \in \cX^c$ at a vertex $w$ and write $p=p_1a_wp_2$ for paths $p_1,p_2$ where $s(p_1)=u$, $t(p_1)=s(p_2)=w$, and $t(p_2)=v$.
We call a term of the form $a_up_1-p_1a_w$ a {\sf $(u,w)$-commutator} and denote it $[p_1]$. Note that there is no ambiguity in this notation because for each vertex there is at most one loop in $\cX^c$ at that vertex. Moreover, $u$ and $w$ correspond to the source and target, respectively, of $p_1$.
Assume there is a loop $a_u \in \cX^c$ at vertex $u$. Then
\begin{align}
\label{eq.comm}
p 
%	= p_1a_wp_2 - a_up_1p_2 + a_up_1p_2 
	= (p_1a_w-a_up_1)p_2 + a_up_1p_2
	= a_up_1p_2 - [p_1].
\end{align}
We can also build iterative $(u,w)$-commutators of the form
$[ \cdots [ ~ [p_1]~] \cdots ]$.
%\[ [ \cdots [ [p]_{u_1}^{w_1} ]_{u_1}^{w_1} \cdots ]_{u_j}^{w_j}.\]
Let $[\cX]$ (resp. $[\cY]$) denote the vector space of iterative commutators of elements in $\cX$ (resp. $\cY$) including trivial commutators, i.e., elements of $\cX$.
%Note that we allow trivial commutators, i.e., paths in $\cX$.

\begin{lemma}
\label{lem.comm1}
Let $p$ be a path in the $\{x_k^{uv}\}$ of degree $d$ with source $u$ and target $v$. Suppose there exists $a_u \in \cX^c$ with source $u$. Then $p=\sum_{s=0}^d a^s p_s$ with $p_s \in [\cX]$.
\end{lemma}
\begin{proof}
We may assume that $p$ contains a loop in $\cX^c$ since otherwise the result is trivial.
Write $p=p_1 a_{u_1}^{k_1} p_2 a_{u_2}^{k_2} \cdots p_\ell a_{u_\ell}^{k_\ell} p_{\ell + 1}$ where each $p_i \in \cX$, $a_{u_i} \in \cX^c$, and $k_i \in \NN$. Note that we allow $p_1$ and $p_{\ell + 1}$ to be trivial but assume that all other $p_i$ are nontrivial. 

Consider the case $\ell=1$ and write $p=p_1a_w^kp_2$ for some $p_1,p_2 \in [\cX]$. If $k=0$ then there is nothing to prove. Assume the conclusion holds for some $k \geq 0$. Applying \eqref{eq.comm} we have
\[
p	
	= p_1a_w^{k+1}p_2 \\
	= (a_up_1 - a_up_1 + p_1a_w)a_w^kp_2
	= (a_up_1 - [p_1])a_w^kp_2.
%	= (ap_1 - [p_1]_u^w)a_w^kp_2.
%	= ap_1a_{u_1}^{k}p_2 - [p_1]_{a}^{a_w}a_{w}^{k}p_2.
\]
Now we may apply the inductive hypothesis to $p_1a_w^kp_2$ and $[p_1] a_w^kp_2$.
%$[p_1]_u^w a_w^kp_2$.

Returning to the main result, assume the conclusion holds for some $\ell \geq 1$. Suppose $p = p_1 a_{u_1}^{k_1} p_2 a_{u_2}^{k_2} \cdots p_\ell a_{u_{\ell+1}}^{k_{\ell+1}} p_{\ell + 2}$. Let $d'$ be the degree of the path $p_1 a_{u_1}^{k_1} p_2 a_{u_2}^{k_2} \cdots p_\ell$. Then by the inductive hypothesis and after relabeling we have 
\[ p = \left( \sum_{s=0}^{d'} a_u^s \tilde{p}_s \right) a_{u_{\ell+1}}^{k_{\ell+1}} p_{\ell + 2}\]
for some $\tilde{p}_s \in [\cX]$. Applying the previous induction to each $\tilde{p}_s a_{u_{\ell+1}}^{k_{\ell+1}} p_{\ell + 2}$ gives the result.
\end{proof}

In Lemma \ref{lem.noloops}, we considered the case where $\alpha_u=0$ for all $u$. Next we consider the other extreme, where $\alpha_u \neq 0$ for all $u$.

\begin{lemma}
\label{lem.all-loops}
Let $A = \kk Q/\I$ and $B=\kk Q'/\I'$ with $\I,\I'$ homogeneous ideals generated in degree at least two.
Suppose for every vertex $u \in Q_0$, $x_{M_{uu}}^{uu} \in \cX^c$.
If $A \iso B$ as (ungraded) algebras, then $A \iso B$ as graded path algebras.
\end{lemma}
\begin{proof}
We maintain the notation introduced in Lemma \ref{lem.noloops}. In particular, we may assume  $M=M_Q=M_{Q'}$ and that $\phi$ is as in \eqref{eq.phi}. 

If for some vertex $u$ we have $\alpha_u \neq 0$, then we can replace $x_{M_{uu}}^{uu}$ and $y_{M_{uu}}^{uu}$ by $\alpha_u\inv x_{M_{uu}}^{uu}$ and $\alpha_u\inv y_{M_{uu}}^{uu}$, respectively, so that
$\phi(x_{M_{uu}}^{uu}) = f_u + y_{M_{uu}}^{uu} + (y_{M_{uu}}^{uu})'$ for some $(y_{M_{uu}}^{uu})' \in (B_+)^2$.

Let $p$ be a path with source $u$ and target $w$ consisting of arrows in $\cX$. Then $\phi(p) = q + q'$ for some path $q$ consisting of arrows in $\cY$ and some $q' \in (B_+)^2$. Set $a_u = x_{M_{uu}}^{uu}$ (resp. $b_u = y_{M_{uu}}^{uu}$) and similarly for $a_w$ and $b_w$.
By the above, $\phi(a_u)= f_u + b_u + b_u'$ and $\phi(a_w) = f_w + b_w + b_w'$ for some $b_u',b_w' \in (B_+)^2$. Thus,
\begin{align}
\label{eq.comm2}
\phi([p]) - [\phi(p)]	%\phi([p]_{u}^{w}) - [\phi(p)]_{s(p)}^{t(p)}
%	= \phi(a_u p - p a_w) - (b_u \phi(p) - \phi(p) b_w)
%	= (f_u + b_u + b_u')\phi(p) - \phi(p)(f_w + b_w + b_w') - (b_u \phi(p) - \phi(p) b_w)
	&= (f_u\phi(p)-\phi(p)f_w) + \text{(higher degree terms)}.
\end{align}
Hence, if $\deg(p)=d$, then $\phi([p]) - [\phi(p)] \in (B_+)^{d+1}$.
%$\phi([p]_{a_w}^{a_u}) - [\phi(p)]_{\phi_1(a_w)}^{\phi_1(a_u)} \in (B_+)^{d+1}$.

More generally, we write $r([\cX])$ (resp. $r([\cY])$) for a polynomial in the $[\cX]$ (resp. $[\cY]$).
If $r$ is a homogeneous relation of degree $d$ in the $[\cX]$, then by \eqref{eq.comm2},
\begin{align}
\label{eq.reln1}
r([\cY]) = r([\cY]) - \phi(r([\cX])) \in (B_+)^{d+1}.
\end{align}

Now let $r$ be any homogeneous relation in the $\{x_k^{uv}\}$ of path degree $d$ with source $u$ and target $v$. We claim that $r$ is also a relation in the $\{y_k^{uv}\}$ and hence $\dim (A_d)_{uv} = \dim (B_d)_{uv}$. This claim holds trivially for all $u,v$ when $d=0$ or $1$ so assume it holds for all $n < d$.
% that $\dim (A_n)_{uv} = \dim (B_n)_{uv}$ for all $u,v$ with $n<d$.

Let $a_u \in \cX^c$ be a loop at vertex $u$, which exists by hypothesis. By Lemma \ref{lem.comm1},
\[ r = \sum_{s=0}^d a_u^s r_s([\cX])\]
where each $r_s([\cX])$ is a homogeneous polynomial of degree $d-s$ in $[\cX]$ with source $u$ and target $v$. Let $t$ be the largest integer for which $r_t([\cX]) \neq 0$. 
If $t=0$, then $r$ is a relation in the $[\cX]$ and the claim holds by \eqref{eq.reln1}. Assume $t>0$ 
%and let $a=x_{M_{uu}}^{uu}$ 
so that 
\[
%\label{eq.reln3}
0 	= \phi(r) 
%	= \phi\left( \sum_{s=0}^t a_u^s r_s([\cX]) \right)
	=  \sum_{s=0}^t (f_u + b_u + b_u')^s \phi( r_s([\cX]) )
\]
where $b_u=y_{M_{uu}}^{uu}$ and $b' \in (B_+)^2$.
%where $\alpha e_u+b+b' = \phi(a)$, $b \in \kk\cY^c$, $b' \in (B_+)^2$, $\alpha \in \kk$.
Then $r_t([\cY]) \in (B_+)^{d-t+1}$ and so by homogeneity, $r_t([\cY])=0$.
This implies that $r_t([\cY])$ is a relation of degree $d-t$ from vertex $u$ to vertex $v$ and by 
our inductive hypothesis, $\dim(A_{d-t})_{uv} = \dim(B_{d-t})_{uv}$.
It follows that there exists a relation $r'$ in the $\{x_k^{uv}\}$ of degree $d-t$ from vertex $u$ to vertex $v$ such that $r'$ is nonzero in $\{y_k^{uv}\}$. This contradicts our hypothesis and so the claim holds for $n=d$.

Thus, $\dim (A_d)_{uv} \geq \dim (B_d)_{uv}$ for all $d$ and all choices of $u$ and $v$.
Applying $\phi\inv$ we obtain the reverse inequality and so $\dim (A_d)_{uv} = \dim (B_d)_{uv}$.
Hence, the map $\phi: A \rightarrow B$ given by $\phi(x_k^{uv}) = y_k^{uv}$ 
and $\phi(e_u)=f_u$ is a graded path algebra isomorphism.
\end{proof}

Our analysis so far has covered the two extreme cases: $\alpha_u=0$ for all $u \in Q_0$ or $\alpha_u \neq 0$ for all $u \in Q_0$. We now consider the intermediate case and prove our main theorem.

\begin{theorem}
\label{thm.main}
Let $A = \kk Q/\I$ and $B=\kk Q'/\I'$ with $\I,\I'$ homogeneous ideals generated in degree at least two.
If $A \iso B$ as (ungraded) algebras, then $A \iso B$ as graded path algebras.
\end{theorem}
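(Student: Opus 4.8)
The plan is to promote the abstract isomorphism $\phi$ to its \emph{leading-term} map and prove that the latter is an isomorphism of graded path algebras. First I would apply Lemma~\ref{lem.verts}: after renumbering the vertices of $Q'$ we may assume $|Q_0|=|Q_0'|=n$, that $M:=M_Q=M_{Q'}$, and that $\phi_0(e_u)=f_u$, so the permutation required in the definition of graded-path-isomorphism is the identity and only the existence of a suitable graded map remains to be produced. Next I would arrange that $\phi$ carries trivial paths to trivial paths exactly, $\phi(e_u)=f_u$, which is the statement advertised after Lemma~\ref{lem.verts}. Since $\{\phi(e_u)\}$ is a complete set of orthogonal idempotents with $\phi(e_u)=f_u+(\text{strictly higher path length})$, the families $\{\phi(e_u)\}$ and $\{f_u\}$ are congruent modulo $B_+$, and I would conjugate one to the other by a unit of $B$; replacing $\phi$ by this inner automorphism composed with $\phi$ keeps it an algebra isomorphism while fixing every $f_u$. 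Performing the symmetric normalization on $\phi\inv$, and absorbing the degree-zero scalar that the image of a loop generator may carry, yields $\phi(A_+)=B_+$.

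With $\phi(e_u)=f_u$ and $\phi(A_+)=B_+$ in hand, I would define $\psi\colon A\to B$ on generators by $\psi(e_u)=f_u$ and $\psi(a)=\phi_1(a)$ for each arrow $a$, extended as a $\kk$-algebra homomorphism. By construction $\psi$ is graded and sends $(A_n)_{uv}$ into $(B_n)_{uv}$. Because every $\phi(a)$ now lies in $B_+$, the lowest-degree component of $\phi(a_1\cdots a_d)$ is exactly $\phi_1(a_1)\cdots\phi_1(a_d)$; hence for a homogeneous relation $r\in\I$ of degree $d\ge 2$ one has $\psi(r)=[\phi(r)]_d=[0]_d=0$, so $\psi$ kills $\I$ and is well defined. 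Thus $\psi$ is a graded path-algebra homomorphism respecting the vertex decomposition, and repeating the construction for $\phi\inv$ produces a graded homomorphism $\psi'\colon B\to A$ in the opposite direction.

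It remains to prove that $\psi$ is bijective, which I expect to be the main obstacle and the step where the commutators of \cite[Theorem~1]{BZ1} are replaced by the tangent-dimension machinery of this note. Since $\phi$ induces an isomorphism $A_+/(A_+)^2 \xrightarrow{\ \sim\ } B_+/(B_+)^2$, a nonzero degree-one $x$ with $\psi(x)=\phi_1(x)=0$ would force $\phi(x)\in (B_+)^2$ and hence $x\in (A_+)^2\cap A_1=0$, a contradiction; here the matching $M=N$ supplied by Lemma~\ref{lem.codim} and the ideals $J_S(A)$ is exactly the commutator-free input that guarantees $\psi$ restricts to a map $(A_1)_{uv}\to(B_1)_{uv}$ between spaces of equal dimension $M_{uv}$, whence an isomorphism in degree one. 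Because $B$ is generated in degree one, $\psi$ is then surjective, and likewise $\psi'$. Finally I would upgrade surjectivity to bijectivity by a graded-Hopfian argument: the composite $\psi'\circ\psi$ is a graded endomorphism of $A$ preserving each finite-dimensional component $(A_n)_{uv}$, and a surjective endomorphism of a finite-dimensional space is injective, so $\psi'\psi$ is injective and therefore $\psi$ is an isomorphism of graded path algebras. The delicate point throughout is that filtration-preservation genuinely fails for $\phi$ itself, as the displayed two-vertex automorphism shows, so the argument must extract the graded invariants intrinsically through the codimension $I_n$ ideals rather than read them off $\phi$ directly.
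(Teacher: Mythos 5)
Your architecture (normalize $\phi$ so that it fixes the vertex idempotents and the irrelevant ideal, then pass to the leading-term map) reproduces only the easy half of the paper's argument, and both of your normalization steps conceal genuine gaps. Take the conjugation step first. Any candidate conjugating unit is of the form $v = 1 + z$ with $z \in B_+$ (e.g.\ $v = \sum_u \phi(e_u) f_u$), and such an element need \emph{not} be invertible in $B$: the ideal $B_+$ is not contained in the Jacobson radical, as $1+x$ in $\kk[x]$ already shows. It is invertible only in the $B_+$-adic completion, so conjugation produces a map into the completion, not into $B$. The paper's two-vertex example is misleading here: there the correction term $a$ is nilpotent, which is why conjugation by $1+a$ happens to work. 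Note that the paper derives "some isomorphism sends trivial paths to trivial paths" as a \emph{consequence} of Theorem \ref{thm.main}, not as an ingredient.

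The fatal gap is the phrase "absorbing the degree-zero scalar that the image of a loop generator may carry." That absorption is precisely the hard case of the theorem, and it cannot be done by composing $\phi$ with automorphisms. If $a$ is a loop at $u$ with $\phi(a) = \alpha_u f_u + y + y'$ and $\alpha_u \neq 0$, removing the constant would require a translation automorphism of the form $a \mapsto a - \alpha_u e_u$, which in general is not well defined on $\kk Q/\I$ because it does not preserve the homogeneous relations (already $\kk\langle x\rangle/(x^2)$ admits no such map). When some $\alpha_u \neq 0$ one simply has $\phi(A_+) \neq B_+$, and no normalization restores this; your subsequent steps (the leading-term map $\psi$ killing $\I$, injectivity in degree one via $A_+/(A_+)^2 \iso B_+/(B_+)^2$, and the graded-Hopfian finish, all of which are sound in themselves) presuppose exactly the identity $\phi(A_+) = B_+$ that needs to be proved. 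The paper instead splits into two cases: when all $\alpha_u = 0$, it passes to associated graded algebras, which is essentially your argument; when some $\alpha_u \neq 0$, it runs an induction on degree transferring each homogeneous relation in the distinguished loops $\{x^{uu}_{M_{uu}}\}$ to a relation in the $\{y^{uu}_{M_{uu}}\}$, concluding $\dim (A_d)_{uv} = \dim (B_d)_{uv}$ for all $d, u, v$, and then writes down the graded isomorphism \emph{directly} as $x_k^{uv} \mapsto y_k^{uv}$, $e_u \mapsto f_u$, rather than obtaining it from $\phi$ by normalization. Your proposal collapses exactly where that inductive relation-transfer argument is needed.
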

\begin{proof}
We maintain the notation introduced in Lemmas \ref{lem.noloops} and \ref{lem.all-loops}. Let 
\[ U = \{ u \in Q_0 : \phi_0(x_{M_{uu}}^{uu}) = 0 \}\] 
and let $\oQ$ be the quiver obtained by attaching to $Q$ an additional loop $a_u$ at each $u \in U$. The ideal $\I$ of $\kk Q$ extends to an ideal of $\kk\oQ$. Define $\oA=\kk\oQ/\I$. We obtain $\overline{Q'}$ similarly by attaching a corresponding loop $b_u$ to $Q'$ for each $u \in U$ and define $\oB=\kk\overline{Q'}/\I'$. 

Note that we still have $\phi:A \to B$ as in \eqref{eq.phi}. We extend $\phi$ to a map $\ophi:\oA \to \oB$ by setting $\ophi(a_u)=f_u + b_u$ for each $u \in U$. Then $\ophi$ preserves the path algebra relations. For example, suppose $\overline{Q}$ has a loop $a_v$ at vertex $v \neq u$. Then according to Lemma \ref{lem.verts}, each summand of $\phi(a_v)$ has source and target $v$. Consequently, $\phi(a_ua_v) = (f_u + b_u)\phi(a_v) = 0$. Moreover, $\ophi$ (trivially) preserves the generators of $\I$ so that $\ophi$ is an isomorphism $\oA \to \oB$. 

In the notation of Lemma \ref{lem.noloops}, let $\overline{\cX}$ (resp. $\overline{\cY}$) be the set of generators in $\oA$ (resp. $\oB$) corresponding to $\cX$ (resp. $\cY$) in $A$ (resp. $B$). Define $\overline{\cX}^c$ and $\overline{\cY}^c$ similarly. Since $\ophi$ is an extension of $\phi$, then $\overline{\cX}=\cX$ and $\overline{\cY}=\cY$, while $\overline{\cX}^c = \cX^c \cup \{a_u\}_{u \in U}$ and $\overline{\cY}^c = \cY^c \cup \{b_u\}_{u \in U}$.

By Lemma \ref{lem.all-loops}, there is an isomorphism $\psi:\oA \to \oB$ which maps $x_k^{uv} \mapsto y_k^{uv}$. That is, $a_u \mapsto b_u$ for every $u \in U$. Restricting $\psi$ to the generators of $A$, i.e., $\cX \cup \cX^c$, it follows directly that $\psi$ is an isomorphism $\kk Q \to \kk Q'$. Let $r \in \I$ be a relation in the $\{x_k^{uv}\}$. The proof of Lemma \ref{lem.all-loops} shows that $r$ is a relation in the $\{y_k^{uv}\}$. By definition, $r$ does not involve those $a_u$ with $u \in U$. Hence, restricting $\psi$ as above gives a bijection $\I \to \I'$ and so $\psi$ restricts to a graded isomorphism $A \to B$.
\end{proof}

An initial application of Theorem \ref{thm.main} is the Zariski cancellation problem.
See \cite{BZ2,LWZ1} for more background on this problem for noncommutative algebras.

Let $Q$ be a quiver with $|Q_0|=n$ and $A=\kk Q/\I$ a graded path algebra.
Then the polynomial extension $A[t]$ is again a graded path algebra with adjacency matrix $M_Q+I_n$.
The following theorem is now almost immediate from \cite[Theorem 9]{BZ1}. 

\begin{theorem}
Let $A$ and $B$ be graded path algebras.
Suppose that $\cnt(A) \cap A_1 = \{0\}$.
If $A[t_1,\hdots,t_n] \iso B[s_1,\hdots,s_n]$ as ungraded algebras, then $A \iso B$.
\end{theorem}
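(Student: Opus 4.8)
The plan is to use Theorem \ref{thm.main} to replace the given ungraded isomorphism by a graded, vertex-preserving one, and then to isolate the adjoined central element by the method of \cite[Theorem 9]{BZ1}, the hypothesis $\cnt(A)\cap A_1=\{0\}$ being precisely what makes that element essentially unique. Write $n=|Q_0|$ and regard $A[t_1,\dots,t_n]$ as the polynomial extension in the single central element $t=t_1+\cdots+t_n$, where $t_v=e_vte_v$ is a loop at vertex $v$; as noted before the statement it is a graded path algebra with adjacency matrix $M_Q+I_n$, and its defining ideal is homogeneous and generated in degree at least two, the only new relations being the degree-two relations expressing that $t$ is central. The same applies to $B[s_1,\dots,s_n]$ with $s=s_1+\cdots+s_n$. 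Note also that the center of a graded algebra is itself graded, so the hypothesis reads $\cnt(A)_1=0$.

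First I would feed the ungraded isomorphism $A[t_1,\dots,t_n]\iso B[s_1,\dots,s_n]$ into Theorem \ref{thm.main} to obtain a graded path algebra isomorphism $\Phi$; after renumbering vertices (Lemma \ref{lem.verts}) I may assume $M_Q=M_{Q'}$ and $\Phi(e_v)=f_v$ for every $v$. Here Theorem \ref{thm.main} plays the role that \cite[Theorem 0.1]{BZ1} plays in the connected case, and it is the essential new ingredient: a connected component of a quiver is \emph{not} connected graded in the sense of \cite{BZ1} unless it is a single vertex, so \cite[Theorem 9]{BZ1} cannot be invoked by reduction to its hypotheses and its method must instead be adapted.

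Next I would identify $\Phi(t)$. A short computation of the degree-one part of the center, using $\cnt(A)_1=0$, gives $\cnt(A[t_1,\dots,t_n])_1=\cnt(A)_0\cdot t$, whereas in general $\cnt(B[s_1,\dots,s_n])_1=\cnt(B)_1\oplus\cnt(B)_0\cdot s$. Since $\Phi$ carries one of these spaces onto the other they have equal dimension; as $\dim\cnt(A)_0$ and $\dim\cnt(B)_0$ both equal the number of connected components of $Q$ (equivalently of $Q'$, since $M_Q=M_{Q'}$), a dimension count forces $\cnt(B)_1=0$ and hence $\cnt(B[s_1,\dots,s_n])_1=\cnt(B)_0\cdot s$. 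Thus $\Phi(t)=b_0s$ for some $b_0=\sum_w\gamma_wf_w\in\cnt(B)_0$, and multiplying by $f_v$ on both sides yields $\Phi(t_v)=f_v\Phi(t)f_v=\gamma_vs_v$. Injectivity of $\Phi$ together with $t_v\neq 0$ forces every $\gamma_v$ to be nonzero.

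Finally, from $\Phi(t_v)=\gamma_vs_v$ with all $\gamma_v\neq 0$ I would deduce $\Phi\bigl(t\,A[t_1,\dots,t_n]\bigr)=s\,B[s_1,\dots,s_n]$: each $s_v=\gamma_v^{-1}f_v\Phi(t)$ lies in the ideal generated by $\Phi(t)$, giving one inclusion, while $\Phi(t)=\sum_v\gamma_vs_v$ lies in the ideal generated by $s$, giving the other. Consequently $\Phi$ descends to an isomorphism $A\iso A[t_1,\dots,t_n]/(t)\to B[s_1,\dots,s_n]/(s)\iso B$. I expect the main obstacle to be exactly the disconnected case flagged above: when $Q$ is disconnected, $\cnt(A)_0$ is multidimensional, so $t$ is no longer the unique central element of degree one up to scalar and cannot simply be matched with $s$. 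The remedy is to argue vertex-by-vertex with the loops $t_v$, using the vertex-preservation supplied by Theorem \ref{thm.main} and the dimension count that transfers the hypothesis $\cnt(A)_1=0$ to the a priori unknown fact $\cnt(B)_1=0$.
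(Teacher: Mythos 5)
Your method is exactly the one the paper has in mind: the paper gives no written proof at all, only the remark that the result is ``almost immediate'' from \cite[Theorem 9]{BZ1} once Theorem \ref{thm.main} is available, and your argument---upgrade the ungraded isomorphism to a graded, vertex-preserving one via Theorem \ref{thm.main}, use gradedness of the center plus a dimension count in degree one to transfer the hypothesis $\cnt(A)\cap A_1=\{0\}$ to the a priori unknown $\cnt(B)\cap B_1=\{0\}$, then match the ideals generated by the degree-one central elements and pass to quotients---is precisely the Bell--Zhang cancellation argument transported to this setting. Your handling of the disconnected case, where $\cnt(A)_0$ is spanned by the component indicator idempotents rather than by $1$ alone, is the only point where the path-algebra situation genuinely differs from the connected case of \cite{BZ1}, and you treat it correctly; note, though, that $\dim\cnt(A)_0=\dim\cnt(B)_0$ already follows from the graded isomorphism restricted to degree zero of the centers, so the count of connected components is not actually needed.

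The one slip is your reading of the notation. In the Zariski cancellation problem, and in \cite[Theorem 9]{BZ1} whose statement this theorem mirrors, $t_1,\dots,t_n$ are $n$ \emph{independent} central indeterminates, so $A[t_1,\dots,t_n]=A\otimes\kk[t_1,\dots,t_n]$; they are not the vertex components $e_vte_v$ of a single adjoined variable, and the integer $n$ is unrelated to $|Q_0|$ (an unfortunate collision with the paragraph preceding the theorem). As written, your proof establishes the single-variable statement $A[t]\iso B[s]\Rightarrow A\iso B$, which is a different hypothesis. Fortunately the argument transfers verbatim: $A[t_1,\dots,t_n]$ is a graded path algebra with adjacency matrix $M_Q+nI_{|Q_0|}$ and defining ideal generated in degree two, one has $\cnt(A[t_1,\dots,t_n])_1=\bigoplus_{i=1}^n\cnt(A)_0\,t_i$, the same dimension count forces $\cnt(B)\cap B_1=\{0\}$, and the two-sided ideal generated by the degree-one part of the center is exactly $(t_1,\dots,t_n)$ on one side and $(s_1,\dots,s_n)$ on the other (each $t_i$, resp.\ $s_i$, lies in it because $1\in\cnt(A)_0$). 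The graded isomorphism therefore carries one ideal onto the other and descends to $A\iso B$, as in your final step.
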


%\begin{proof}
%Set $C=A[t_1,\hdots,t_n]$ and $D=B[s_1,\hdots,s_n]$.
%Then by Theorem \ref{thm.main}, there is a graded path algebra isomorphism $\phi:C \rightarrow D$.
%By our hypothesis on the center, $\cnt{Z}(C) \cap C_1 = \bigsum_{i=1}^n k t_i$
%and $\cnt{Z}(D) \cap D_1 = \bigsum_{i=1}^n k s_i$.
%\end{proof}

Let $Q$ be the quiver below.
\[\xymatrix{ & \bullet^u  \ar@/^/[rr]^d \ar@(ul,dl)[]_a &  & \bullet^v \ar@/^/[ll]^c \ar@(ur,dr)[]^b & &}\]
If $\I = (a^2-dc,b^2-cd) \subset \kk Q$, then $\kk Q/\I \iso A\# G$ where
$A = \kk\langle u,v : u^2-v^2 \rangle$ and $G=\langle g \rangle$ is the cyclic group of order $2$
acting on $A$ by $g(u)=u$ and $g(v)=-v$.
For $q_1,q_2 \in \kk^\times$, set $C(q_1,q_2) = \kk Q/(ad-q_1db,ca-q_2bc)$.
Then $C(1,1) \iso B\#G$ where $B = \kk[u,v]$ and $G$ acts on $B$ by
%and $G=\langle g \rangle$ is the cyclic group of order $2$ acting on $B$ by 
$g(u)=u$ and $g(v)=-v$. It follows immediately from Theorem \ref{thm.main} that $A\# G \not\iso B\#G$.
%where $A\# G$ is as in Example \ref{ex.skgr1}.

\begin{proposition}
Let $p_1,p_2,q_1,q_2 \in \kk^\times$.
Then $C(q_1,q_2) \iso C(p_1,p_2)$ if and only if there exists $k \in \kk^\times$
such that $(p_1,p_2) = (kq_1,kq_2)$ or $(p_1,p_2) = (kq_1\inv,kq_2\inv)$.
\end{proposition}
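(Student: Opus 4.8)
The plan is to use Theorem \ref{thm.main} to replace ``isomorphic as ungraded algebras'' by ``isomorphic as graded path algebras,'' and then to classify the latter by hand. Both ideals here are homogeneous and generated in degree two, so the theorem applies and gives, in either direction, that $C(q_1,q_2)\iso C(p_1,p_2)$ as ungraded algebras if and only if there is a graded path algebra isomorphism $\phi$ between them. The quiver $Q$ has adjacency matrix $M_Q=\left(\begin{smallmatrix}1&1\\1&1\end{smallmatrix}\right)$, so there is exactly one arrow between each ordered pair of vertices and, the relations having degree two, there are no degree-one relations. Consequently each space of length-one paths between a fixed ordered pair of vertices is one-dimensional and is carried isomorphically onto the corresponding space by $\phi$. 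This forces $\phi$ to be built from a permutation $\sigma\in\cS_2$ of the vertices together with four nonzero scalars, one attached to each arrow; everything that remains is then a finite computation with these scalars.

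For the ``if'' direction I would exhibit one isomorphism for each family. When $(p_1,p_2)=(kq_1,kq_2)$ I would fix both vertices and rescale a single loop, say $e_u\mapsto e_u$, $e_v\mapsto e_v$, $a\mapsto a$, $b\mapsto k\,b$, $c\mapsto c$, $d\mapsto d$; substituting into $ad-q_1db$ and $ca-q_2bc$ and reducing modulo the defining relations of $C(p_1,p_2)$ shows that both relations vanish exactly when $p_1=kq_1$ and $p_2=kq_2$. For the inverse family I would instead swap the two vertices, using an assignment of the shape $e_u\leftrightarrow e_v$, $a\mapsto\lambda b$, $b\mapsto\mu a$, $c\mapsto\nu d$, $d\mapsto\rho c$ with nonzero scalars. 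The key point is that the swap carries the $u\to v$ relation of $C(q_1,q_2)$ to a $v\to u$ relation of $C(p_1,p_2)$ and conversely, so that the two quadratic relations are interchanged, and this is precisely what sends the parameters to their inverses. In each case the constructed map is patently bijective, its inverse being the analogous rescaling or swap.

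For the ``only if'' direction I would begin with an arbitrary graded path algebra isomorphism $\phi$, let $\sigma\in\cS_2$ be the induced permutation of $\{u,v\}$, and split into the cases $\sigma=\id$ and $\sigma=(u\,v)$. In each case $\phi$ acts diagonally, sending each arrow to a nonzero scalar multiple of the arrow prescribed by $\sigma$. Applying $\phi$ to the two generators $ad-q_1db$ and $ca-q_2bc$ and using that their images must lie in the ideal defining $C(p_1,p_2)$ yields two scalar identities relating $(p_1,p_2)$ to $(q_1,q_2)$. Solving these identities produces the proportional family when $\sigma=\id$ and the inverse family when $\sigma=(u\,v)$, and taking the union of the two cases gives exactly the stated characterization.

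I expect the reduction via Theorem \ref{thm.main} and the ``if'' direction to be routine; the genuinely delicate point, in both directions, is the bookkeeping in the case $\sigma=(u\,v)$. Because the vertex swap exchanges the $u\to v$ and $v\to u$ relations, it also exchanges the roles of the indices carried by $q_1$ and $q_2$, and tracking this pairing correctly is what determines the precise arrangement of $q_1$ and $q_2$ in the inverse family; a sign or index slip here is the most likely source of error. It is worth recording at the outset that the reduction to diagonal maps rests essentially on $M_Q$ being the all-ones matrix, so that $\phi$ cannot mix arrows in degree one — and this non-mixing is exactly what Theorem \ref{thm.main} and Lemma \ref{lem.verts} supply.
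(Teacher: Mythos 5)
Your overall strategy coincides with the paper's: reduce via Theorem \ref{thm.main} to graded path algebra isomorphisms, note that since each $(C_1)_{uv}$ is one-dimensional such an isomorphism is a vertex permutation $\sigma\in\cS_2$ plus one nonzero scalar per arrow, and then do the scalar bookkeeping (the paper writes out only $\sigma=\id$ and dismisses the swap with ``the second case is similar''). Your reduction and your identity-case construction are correct. The gap is exactly the computation you defer as ``delicate bookkeeping'' in the case $\sigma=(u\,v)$: carried out, it does not produce what you claim. With $\phi(a)=k_1b$, $\phi(b)=k_2a$, $\phi(c)=k_3d$, $\phi(d)=k_4c$, and working modulo the relations $ad=p_1db$ and $ca=p_2bc$ of $C(p_1,p_2)$, one gets
\begin{align*}
\phi(ad-q_1db) &= k_1k_4\,bc - q_1k_2k_4\,ca = k_4\left(k_1-q_1k_2p_2\right)bc,\\
\phi(ca-q_2bc) &= k_1k_3\,db - q_2k_2k_3\,ad = k_3\left(k_1-q_2k_2p_1\right)db,
\end{align*}
which forces $(p_1,p_2)=(kq_2\inv,\,kq_1\inv)$ with $k=k_1/k_2$. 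The inverses appear with the indices \emph{transposed} --- precisely the pairing issue you flagged as the likely source of error but did not resolve.

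This is not a cosmetic slip, because the transposed family is redundant: $(kq_2\inv,kq_1\inv)=(k'q_1,k'q_2)$ with $k'=k(q_1q_2)\inv$, so the swap case yields nothing beyond the proportional family. The case analysis therefore gives $C(q_1,q_2)\iso C(p_1,p_2)$ if and only if $(p_1,p_2)=(kq_1,kq_2)$ for some $k\in\kk^\times$, and the family $(kq_1\inv,kq_2\inv)$ appearing in the statement is realized by no isomorphism at all unless it happens to meet the proportional family, which requires $q_1^2=q_2^2$. Concretely, over $\mathbb{Q}$ with $(q_1,q_2)=(2,1)$ and $k=1$ the statement asserts $C(2,1)\iso C(1/2,1)$, but the computation above rules this out. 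So the ``if'' direction of the statement, for the second family, cannot be completed by your plan (or any other); your proposal inherits the index transposition from the statement itself, and the paper's own proof hides the same transposition behind ``the second case is similar.'' Had you executed the swap computation you correctly identified as the danger point, you would have found the discrepancy rather than a proof.
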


\begin{proof}
By Theorem \ref{thm.main}, we may assume that there is an isomorphism of graded path algebras
$\phi: C(q_1,q_2) \rightarrow C(p_1,p_2)$. 
Thus, we have one of the following cases:
\begin{enumerate}
\item $\phi(a) = k_1 a$, $\phi(b) = k_2 b$, $\phi(c) = k_3 c$, $\phi(d) = k_4 d$,
\item $\phi(a) = k_1 b$, $\phi(b) = k_2 a$, $\phi(c) = k_3 d$, $\phi(d) = k_4 c$,
\end{enumerate}
for some $k_i \in \kk^\times$.

Suppose we are in the first case, then
\begin{align*}
	\phi( ad-q_1db ) &= k_4(k_1ad-q_1k_2db) = k_4(k_1p_1 - k_2 q_1)db, \\
	\phi( ca-q_2bc ) &= k_3(k_1ca-q_2k_2bc) = k_3(k_1p_2-k_2q_2)bc.
\end{align*}
Thus, $p_i = (k_2/k_1)q_i$. The second case is similar.
\end{proof}

%Second case.
%\begin{align*}
%	\phi( ad-q_1db ) &= k_4(k_1bc-q_1k_2ca) = k_4(k_1 - k_2 q_1p_2)bc \\
%	\phi( ca-q_2bc ) &= k_3(k_1db-q_2k_2ad) = k_3(k_1-k_2q_2p_1)bc.
%\end{align*}

\subsection*{Acknowledgments} 
The author thanks Dan Rogalski, Kent Vashaw, and Robert Won for helpful conversations,
and the referee for several suggestions that improved the exposition and corrected some errors from the initial draft. 

\bibliographystyle{amsplain}
%\bibliography{biblio}{}

\begin{thebibliography}{10}

\bibitem{BJ}
V.~V. Bavula and D.~A. Jordan, \emph{Isomorphism problems and groups of
  automorphisms for generalized {W}eyl algebras}, Trans. Amer. Math. Soc.
  \textbf{353} (2001), no.~2, 769--794. \MR{1804517}

\bibitem{BZ1}
Jason Bell and James~J. Zhang, \emph{An isomorphism lemma for graded rings},
  Proc. Amer. Math. Soc. \textbf{145} (2017), no.~3, 989--994. \MR{3589298}

\bibitem{BZ2}
\bysame, \emph{Zariski cancellation problem for noncommutative algebras},
  Selecta Math. (N.S.) \textbf{23} (2017), no.~3, 1709--1737. \MR{3663593}

\bibitem{Bock}
Raf Bocklandt, \emph{Graded {C}alabi {Y}au algebras of dimension 3}, J. Pure
  Appl. Algebra \textbf{212} (2008), no.~1, 14--32. \MR{2355031}

\bibitem{BSW}
Raf Bocklandt, Travis Schedler, and Michael Wemyss, \emph{Superpotentials and
  higher order derivations}, J. Pure Appl. Algebra \textbf{214} (2010), no.~9,
  1501--1522. \MR{2593679}

\bibitem{CPWZ1}
S.~Ceken, J.~H. Palmieri, Y.-H. Wang, and J.J. Zhang, \emph{The discriminant
  controls automorphism groups of noncommutative algebras}, Adv. Math.
  \textbf{269} (2015), 551--584. \MR{3281142}

\bibitem{Giso1}
Jason Gaddis, \emph{Isomorphisms of some quantum spaces}, Ring theory and its
  applications, Contemp. Math., vol. 609, Amer. Math. Soc., Providence, RI,
  2014, pp.~107--116. \MR{3204354}

\bibitem{Giso2}
\bysame, \emph{The isomorphism problem for quantum affine spaces, homogenized
  quantized {W}eyl algebras, and quantum matrix algebras}, J. Pure Appl.
  Algebra \textbf{221} (2017), no.~10, 2511--2524. \MR{3646315}

\bibitem{GR1}
Jason Gaddis and Daniel Rogalski, \emph{Quivers supporting twisted
  {C}alabi-{Y}au algebras}, arXiv:1809.10222 (2018).

\bibitem{GH}
K.~R. Goodearl and J.~T. Hartwig, \emph{The isomorphism problem for
  multiparameter quantized {W}eyl algebras}, S\~ao Paulo J. Math. Sci.
  \textbf{9} (2015), no.~1, 53--61. \MR{3413379}

\bibitem{KW}
Ryan Kinser and Chelsea Walton, \emph{Actions of some pointed {H}opf algebras
  on path algebras of quivers}, Algebra Number Theory \textbf{10} (2016),
  no.~1, 117--154. \MR{3474843}

\bibitem{LWZ1}
O.~Lezama, Y.-H. Wang, and J.~J. Zhang, \emph{Zariski cancellation problem for
  non-domain noncommutative algebras}, Math. Z. \textbf{292} (2019), no.~3-4,
  1269--1290. \MR{3980292}

\bibitem{RR2}
Manuel~L Reyes and Daniel Rogalski, \emph{A twisted {C}alabi-{Y}au toolkit},
  arXiv:1807.10249 (2018).

\bibitem{RR1}
\bysame, \emph{Growth of graded twisted {C}alabi-{Y}au algebras}, J. Algebra
  \textbf{539} (2019), 201--259.

\end{thebibliography}

\providecommand{\bysame}{\leavevmode\hbox to3em{\hrulefill}\thinspace}
\providecommand{\MR}{\relax\ifhmode\unskip\space\fi MR }
% \MRhref is called by the amsart/book/proc definition of \MR.
\providecommand{\MRhref}[2]{%
  \href{http://www.ams.org/mathscinet-getitem?mr=#1}{#2}
}
\providecommand{\href}[2]{#2}

\end{document}